\providecommand{\U}[1]{\protect\rule{.1in}{.1in}}
\newtheorem{theorem}{Theorem}
\newtheorem{corollary}[theorem]{Corollary}
\newtheorem{definition}[theorem]{Definition}
\newtheorem{lemma}[theorem]{Lemma}
\newenvironment{proof}[1][Proof]{\noindent\textbf{#1.} }{\ \rule{0.5em}{0.5em}}
\begin{document}

\title{The local equicontinuity of a maximal monotone operator}

\author{M.D. Voisei}

\date{{}}
\maketitle
\begin{abstract}
The local equicontinuity of an operator $T:X\rightrightarrows X^{*}$
with proper Fitzpatrick function $\varphi_{T}$ and defined in a barreled
locally convex space $X$ has been shown to hold on the algebraic
interior of $\operatorname*{Pr}_{X}(\operatorname*{dom}\varphi_{T})$)%
\footnote{see \cite[Theorem\ 4]{MR3252437}%
}. The current note presents direct consequences of the aforementioned
result with regard to the local equicontinuity of a maximal monotone
operator defined in a barreled locally convex space. 
\end{abstract}

\section{Introduction and notations}

Throughout this paper, if not otherwise explicitly mentioned, $(X,\tau)$
is a non-trivial (that is, $X\neq\{0\}$) real Hausdorff separated
locally convex space (LCS for short), $X^{\ast}$ is its topological
dual endowed with the weak-star topology $w^{\ast}$, the weak topology
on $X$ is denoted by $w$, and $(X^{*},w^{*})^{*}$ is identified
with $X$. The class of neighborhoods of $x\in X$ in $(X,\tau)$
is denoted by $\mathscr{V}_{\tau}(x)$. 

The \emph{duality product} or \emph{coupling }of $X\times X^{\ast}$
is denoted by $\left\langle x,x^{\ast}\right\rangle :=x^{\ast}(x)=:c(x,x^{\ast})$,
for $x\in X$, $x^{\ast}\in X^{\ast}$. With respect to the dual system
$(X,X^{*})$, the \emph{polar of} $A\subset X$ is $A^{\circ}:=\{x^{*}\in X^{*}\mid|\langle x,x^{*}\rangle|\le1,\ \forall x\in A\}$.

A set $B\subset X^{*}$ is ($\tau-$)\emph{equicontinuous }if for
every $\epsilon>0$ there is $V_{\epsilon}\in\mathscr{V}_{\tau}(0)$
such that, for every $x^{*}\in B$, $x^{*}(V_{\epsilon})\subset(-\epsilon,\epsilon)$,
or, equivalently, $B$ is contained in the polar $V^{\circ}$ of some
(symmetric) $V\in\mathscr{V}_{\tau}(0)$.

A multi-function $T:X\rightrightarrows X^{*}$ is ($\tau-$)locally
equicontinuous%
\footnote{``locally bounded'' is the notion used in the previous papers on
the subject (see e.g. \cite[Definition\ 1]{MR3252437}, \cite[p. 397]{MR0253014}),
mainly because in the context of barreled spaces, weak-star bounded
and equicontinuous subsets of the dual coincide. %
} at $x_{0}\in X$ if there exists $U\in\mathscr{V}_{\tau}(x_{0})$
such that $T(U):=\cup_{x\in U}Tx$ is a ($\tau-$)equicontinuous subset
of $X^{*}$; ($\tau-$)locally equicontinuous on $S\subset X$ if
$T$ is ($\tau-$)locally equicontinuous at every $x\in S$. The ($\tau-$)local
equicontinuity of $T:X\rightrightarrows X^{*}$ is interesting only
at $x_{0}\in\overline{D(T)}^{\tau}$ ($\tau-$closure) since, for
every $x_{0}\not\in\overline{D(T)}^{\tau}$, $T(U)$ is void for a
certain $U\in\mathscr{V}_{\tau}(x_{0})$. Here $\operatorname*{Graph}T=\{(x,x^{*})\in X\times X^{*}\mid x^{*}\in Tx\}$
is the graph of $T$, $D(T)=\operatorname*{Pr}_{X}(\operatorname*{Graph}T)$
stands for the domain of $T$, where $\operatorname*{Pr}_{X}$ denotes
the projection of $X\times X^{*}$ onto $X$.

The main objective of this paper is to give a description of the local
equicontinuity set (see Definition \ref{les} below) of a maximal
monotone operator $T:X\rightrightarrows X^{*}$ ($T\in\mathfrak{M}(X)$
for short) defined in a barreled space $X$ in terms of its \emph{Fitzpatrick
function} $\varphi_{T}:X\times X^{*}\rightarrow\overline{\mathbb{R}}$
which is given by (see \cite{MR1009594})
\begin{equation}
\varphi_{T}(x,x^{*}):=\sup\{\langle x-a,a^{*}\rangle+\langle a,x^{*}\rangle\mid(a,a^{*})\in\operatorname*{Graph}T\},\ (x,x^{*})\in X\times X^{*}.\label{ff}
\end{equation}

As usual, given a LCS $(E,\mu)$ and $A\subset E$ we denote by ``$\operatorname*{conv}A$''
the \emph{convex hull} of $A$, ``$\operatorname*{cl}_{\mu}(A)=\overline{A}^{\mu}$''
the $\mu-$\emph{closure} of $A$, ``$\operatorname*{int}_{\mu}A$''
the $\mu-$\emph{topological interior }of $A$, ``$\operatorname*{core}A$''
the \emph{algebraic interior} of $A$. The use of the  $\mu-$notation
is enforced barring that the topology $\mu$ is clearly understood. 

When $X$ is a Banach space, Rockafellar showed in \cite[Theorem\ 1]{MR0253014}
that if $T\in\mathfrak{M}(X)$ has $\operatorname*{int}(\operatorname*{conv}D(T))\neq\emptyset$
then $D(T)$ is \emph{nearly-solid} in the sense that $\operatorname*{int}D(T)$
is non-empty, convex, whose closure is $\overline{D(T)}$, while $T$
is locally equicontinuous (equivalently said, locally bounded) at
each point of $\operatorname*{int}D(T)$ and unbounded at any boundary
point of $D(T)$. 

Our results extend the results of Rockafellar \cite{MR0253014} to
the framework of barreled LCS's or present new shorter arguments for
known ones (see Theorems \ref{ot}, \ref{ot-bn}, \ref{mnc}, \ref{msd}
below).

\section{The local equicontinuity set}

One of the main reasons for the usefulness of equicontinuity is that
it ensures the existence of a limit for the duality product on the
graph of an operator. More precisely, if $T:X\rightrightarrows X^{*}$
is locally equicontinuous at $x_{0}\in\overline{D(T)}^{\tau}$ then
for every net $\{(x_{i},x_{i}^{*})\}_{i}\subset\operatorname*{Graph}T$
with $x_{i}\overset{\tau}{\to}x_{0}$, $\{x_{i}^{*}\}_{i}$ is equicontinuous
thus, according to Bourbaki's Theorem, weak-star relatively compact
in $X^{*}$ and, at least on a subnet, $x_{i}^{*}\rightarrow x_{0}^{*}$
weak-star in $X^{*}$ and $\lim_{i}c(x_{i},x_{i}^{*})=c(x_{0},x_{0}^{*})$. 

\medskip

\begin{definition} \emph{\label{les}Given $(X,\tau)$ a LCS, for
every $T:X\rightrightarrows X^{*}$ we denote by
\[
\Omega_{T}^{(\tau)}:=\{x\in\overline{D(T)}^{\tau}\mid T\ {\rm is\ (\tau-)locally\ equicontinuous\ at}\ x\}
\]
the }(meaningful) ($\tau-$)local equicontinuity set of\emph{ $T$.}
\end{definition}

In the above notation, our local equicontinuity result \cite[Theorem\ 8]{MR3252437}
states that if the LCS $(X,\tau)$ is barreled then
\begin{equation}
\forall T:X\rightrightarrows X^{*},\ \operatorname*{core}\operatorname*{Pr}\,\!\!_{X}(\operatorname*{dom}\varphi_{T})\cap\overline{D(T)}^{\tau}\subset\Omega_{T}^{\tau}.\label{eq:lbg}
\end{equation}

\begin{theorem} \label{ot} If $(X,\tau)$ is a barreled LCS and
$T\in\mathfrak{M}(X)$ has $\overline{D(T)}^{\tau}$ convex then 
\begin{equation}
\operatorname*{int}\,\!_{\tau}\operatorname*{Pr}\,\!\!_{X}(\operatorname*{dom}\varphi_{T})=\operatorname*{core}\operatorname*{Pr}\,\!\!_{X}(\operatorname*{dom}\varphi_{T}).\label{eq:-2}
\end{equation}

The following are equivalent

\medskip

\emph{(i)} $\operatorname*{core}\operatorname*{Pr}\,\!\!_{X}(\operatorname*{dom}\varphi_{T})\neq\emptyset$,\emph{
(ii)} $\Omega_{T}^{\tau}\neq\emptyset$ and $\operatorname*{core}\overline{D(T)}^{\tau}\neq\emptyset$,
\emph{(iii)} $\operatorname*{int}_{\tau}D(T)\neq\emptyset$.

\medskip

In this case
\begin{equation}
\Omega_{T}^{\tau}=\operatorname*{core}\operatorname*{Pr}\,\!\!_{X}(\operatorname*{dom}\varphi_{T})=\operatorname*{int}\,\!_{\tau}D(T)\label{eq:-3}
\end{equation}
 and $D(T)$ is $\tau-$near\-ly-solid in the sense that $\operatorname*{int}_{\tau}D(T)=\operatorname*{int}_{\tau}\overline{D(T)}^{\tau}$
and $\overline{D(T)}^{\tau}=\operatorname*{cl}_{\tau}(\operatorname*{int}_{\tau}D(T))$.
\end{theorem}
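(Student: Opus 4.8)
The plan is to pinch $P:=\operatorname*{Pr}_X(\operatorname*{dom}\varphi_T)$ between $D(T)$ and $\overline{D(T)}^\tau$, turn the convexity hypothesis on $\overline{D(T)}^\tau$ into the equality $\overline{P}^\tau=\overline{D(T)}^\tau$, and then run everything through the general bound \eqref{eq:lbg}. The first and central step is to prove that, for any $T\in\mathfrak M(X)$,
\[
\operatorname*{conv}D(T)\ \subseteq\ P\ \subseteq\ \overline{\operatorname*{conv}D(T)}^{\tau},
\]
with $P$ convex. Convexity of $P$ is free: $\varphi_T$ is a supremum of $\tau\times w^\ast$-continuous affine functions, hence convex, so $\operatorname*{dom}\varphi_T$ and its projection $P$ are convex. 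For the left inclusion, if $x=\sum_i\lambda_ia_i$ is a convex combination of points $a_i\in D(T)$, choose $a_i^\ast\in Ta_i$ and set $x^\ast:=\sum_i\lambda_ia_i^\ast$; monotonicity of $T$ gives $\langle x-a,a^\ast\rangle+\langle a,x^\ast\rangle\le\sum_i\lambda_i\langle a_i,a_i^\ast\rangle$ for every $(a,a^\ast)\in\operatorname*{Graph}T$, so $\varphi_T(x,x^\ast)<\infty$. For the right inclusion I would first recall that maximality forces $\varphi_T\ge c$ on all of $X\times X^\ast$ (a point off $\operatorname*{Graph}T$ is not monotonically related to $\operatorname*{Graph}T$, which supplies the lower bound); then, assuming $x\notin\overline{\operatorname*{conv}D(T)}^{\tau}$, a separating functional $u^\ast$ with $\sup_{D(T)}\langle\cdot,u^\ast\rangle\le\gamma<\langle x,u^\ast\rangle$ gives, for every $t\ge0$, both $\varphi_T(x,x^\ast+tu^\ast)\le\varphi_T(x,x^\ast)+t\gamma$ and $\varphi_T(x,x^\ast+tu^\ast)\ge\langle x,x^\ast\rangle+t\langle x,u^\ast\rangle$, which is impossible as $t\to\infty$. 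Since $\overline{D(T)}^\tau$ is assumed convex, $\overline{\operatorname*{conv}D(T)}^{\tau}=\overline{D(T)}^\tau$, hence $D(T)\subseteq P\subseteq\overline{D(T)}^\tau$ and $\overline{P}^\tau=\overline{D(T)}^\tau$.

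Next I would isolate two facts about $\Omega_T^\tau$. First, $\Omega_T^\tau\subseteq D(T)$: if $T$ is locally equicontinuous at $x\in\overline{D(T)}^\tau$, take a net $\{(x_i,x_i^\ast)\}\subseteq\operatorname*{Graph}T$ with $D(T)\ni x_i\xrightarrow{\tau}x$; by the limiting property recorded at the start of Section~2 a subnet of $\{x_i^\ast\}$ converges weak-star to some $x^\ast$ with $\langle x_i,x_i^\ast\rangle\to\langle x,x^\ast\rangle$, and passing to the limit in the monotonicity inequalities shows $(x,x^\ast)$ is monotonically related to $\operatorname*{Graph}T$, so $(x,x^\ast)\in\operatorname*{Graph}T$ and $x\in D(T)$. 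Second, if $\operatorname*{int}_\tau\overline{D(T)}^\tau\ne\emptyset$ then $T$ is not locally equicontinuous at any boundary point $x$ of $\overline{D(T)}^\tau$: by the first fact $x\in D(T)$, pick $x^\ast\in Tx$; a supporting functional $u^\ast\ne0$ at $x$ makes $(x,x^\ast+su^\ast)$ monotonically related to $\operatorname*{Graph}T$ for every $s\ge0$, so $Tx\supseteq\{x^\ast+su^\ast:s\ge0\}$ is a weak-star unbounded ray, contradicting equicontinuity of $T(U)$. Thus $\Omega_T^\tau\subseteq\operatorname*{int}_\tau\overline{D(T)}^\tau$ whenever the latter is nonempty; and since $P\subseteq\overline{D(T)}^\tau$, \eqref{eq:lbg} gives $\operatorname*{core}P\subseteq\Omega_T^\tau\subseteq D(T)$.

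With these in hand the theorem assembles itself. For the equivalences: (iii)$\Rightarrow$(i) since $\operatorname*{int}_\tau D(T)\subseteq\operatorname*{int}_\tau P\subseteq\operatorname*{core}P$; (i)$\Rightarrow$(ii) since $\operatorname*{core}P$ is then a nonempty subset of $\Omega_T^\tau$ and, lying inside $\overline{D(T)}^\tau$, also of $\operatorname*{core}\overline{D(T)}^\tau$; (ii)$\Rightarrow$(iii) by the barrel property (a closed convex set in a barreled space with nonempty algebraic interior has nonempty interior), giving $\operatorname*{int}_\tau\overline{D(T)}^\tau\ne\emptyset$, after which, for $x\in\Omega_T^\tau$, the second fact places $x$ in $\operatorname*{int}_\tau\overline{D(T)}^\tau$ while the first fact makes the open set $U\cap\operatorname*{int}_\tau\overline{D(T)}^\tau$ (with $U$ a neighbourhood witnessing local equicontinuity at $x$) a neighbourhood of $x$ contained in $D(T)$. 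In the surviving common situation $\operatorname*{int}_\tau D(T)\ne\emptyset$, hence $\operatorname*{int}_\tau P\ne\emptyset$, so for the convex set $P$ one has $\operatorname*{core}P=\operatorname*{int}_\tau P=\operatorname*{int}_\tau\overline{P}^\tau=\operatorname*{int}_\tau\overline{D(T)}^\tau$, which is \eqref{eq:-2} (the case $\operatorname*{core}P=\emptyset$ being trivial since $\operatorname*{int}_\tau P\subseteq\operatorname*{core}P$ always); this open set is then squeezed by $\operatorname*{core}P\subseteq\Omega_T^\tau\subseteq\operatorname*{int}_\tau\overline{D(T)}^\tau$ and by $\operatorname*{core}P\subseteq\operatorname*{int}_\tau D(T)\subseteq\operatorname*{int}_\tau P$, yielding \eqref{eq:-3}; $\tau$-near-solidity is then immediate, since $\operatorname*{int}_\tau D(T)=\operatorname*{int}_\tau\overline{D(T)}^\tau$ and the closure of the nonempty interior of a closed convex set is that set. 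The step I expect to be the main obstacle is the right-hand inclusion $P\subseteq\overline{D(T)}^\tau$ above — it is where maximality genuinely enters (through $\varphi_T\ge c$) and where the convexity of $\overline{D(T)}^\tau$ gets used — together with the unbounded-ray argument, which is the actual mechanism of unboundedness at the boundary.
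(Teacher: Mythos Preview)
Your proof is correct and follows essentially the same architecture as the paper's: sandwich $P=\operatorname*{Pr}_X(\operatorname*{dom}\varphi_T)$ between $D(T)$ and $\overline{D(T)}^\tau$, use \eqref{eq:lbg} for the inward inclusion $\operatorname*{core}P\subset\Omega_T^\tau$, a net/limit argument for $\Omega_T^\tau\subset D(T)$, and the supporting-functional/normal-cone ray for unboundedness at the boundary. Two small remarks. First, your derivation of $P\subset\overline{\operatorname*{conv}D(T)}^\tau$ via $\varphi_T\ge c$ and letting $t\to\infty$ is a clean variant of the paper's Lemma~\ref{lost}, which instead shows $P\subset D(T^{+})\cup\overline{\operatorname*{conv}D(T)}^\tau$ and then invokes $T^{+}=T$; both routes are short and equivalent here. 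Second, in your step (ii)$\Rightarrow$(iii) you invoke ``the first fact'' to conclude $U\cap\operatorname*{int}_\tau\overline{D(T)}^\tau\subset D(T)$, but your first fact as stated only gives $\Omega_T^\tau\subset D(T)$; what you actually need (and what your net argument does prove, pointwise at every $y\in U\cap\overline{D(T)}^\tau$) is the paper's stronger claim \eqref{cyber}, namely $U\cap\overline{D(T)}^\tau\subset D(T)$. State it that way and the write-up is clean.
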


In the proof of the Theorem \ref{ot} we use the following lemma.

\begin{lemma} \label{lost} Let $(X,\tau)$ be a LCS and $T:X\rightrightarrows X^{*}$.
Then 
\begin{equation}
\Pr\nolimits _{X}(\operatorname*{dom}\varphi_{T})\subset D(T^{+})\cup\overline{\operatorname*{conv}}^{\tau}(D(T)).\label{eq:}
\end{equation}
Here $T^{+}:X\rightrightarrows X^{*}$ is the operator associated
to the set of elements monotonically related to $T$, $\operatorname*{Graph}T^{+}=\{(x,x^{*})\in X\times X^{*}\mid\langle x-a,x^{*}-a^{*}\rangle\ge0,\ \forall(a,a^{*})\in\operatorname*{Graph}T\}$. 

If, in addition, $T\in\mathcal{M}(X)$, then 
\begin{equation}
\Pr\nolimits _{X}(\operatorname*{dom}\varphi_{T})\subset D(T^{+})\cup\overline{\operatorname*{conv}}^{\tau}(D(T))\subset\operatorname*{cl}\!\!\,_{\tau}\Pr\nolimits _{X}(\operatorname*{dom}\varphi_{T}).\label{lost-rec}
\end{equation}

If, in addition, $T\in\mathfrak{M}(X)$, then 
\begin{equation}
\operatorname*{cl}\!\,\!_{\tau}\operatorname*{Pr}\!\,\!_{X}(\operatorname*{dom}\varphi_{T})=\overline{\operatorname*{conv}}^{\tau}(D(T)).\label{eq:-1}
\end{equation}

If, in addition, $T\in\mathfrak{M}(X)$ and $\overline{D(T)}^{\tau}$
is convex then 
\begin{equation}
\operatorname*{cl}\!\,\!_{\tau}\operatorname*{Pr}\!\,\!_{X}(\operatorname*{dom}\varphi_{T})=\overline{D(T)}^{\tau}.\label{eq:-4}
\end{equation}
\end{lemma}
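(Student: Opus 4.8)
\textbf{Proof proposal for Lemma \ref{lost}.}

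The plan is to prove the four displayed inclusions in the stated order, since each builds on its predecessor. For the first inclusion \eqref{eq:}, I would take $(x,x^{*})\in\operatorname*{dom}\varphi_{T}$ and distinguish two cases according to whether $x$ can be ``separated'' from $\overline{\operatorname*{conv}}^{\tau}(D(T))$. If $x\in\overline{\operatorname*{conv}}^{\tau}(D(T))$ we are done; otherwise, by the Hahn--Banach separation theorem there is $u^{*}\in X^{*}$ and $\alpha\in\mathbb{R}$ with $\langle x,u^{*}\rangle>\alpha\ge\langle a,u^{*}\rangle$ for all $a\in D(T)$. Feeding the points $(a,a^{*}+tu^{*})$ — wait, these need not be in $\operatorname*{Graph}T$; instead I would exploit that $\varphi_{T}(x,x^{*})<\infty$ directly: for every $(a,a^{*})\in\operatorname*{Graph}T$, $\langle x-a,a^{*}\rangle+\langle a,x^{*}\rangle\le\varphi_{T}(x,x^{*})$, which rearranges to $\langle x-a,x^{*}-a^{*}\rangle\ge\langle x-a,x^{*}\rangle-\varphi_{T}(x,x^{*})+\langle x,x^{*}\rangle-\langle x,x^{*}\rangle$; the cleanest route is the known identity $\langle x-a,x^{*}-a^{*}\rangle=\langle x,x^{*}\rangle+\langle a,a^{*}\rangle-\langle x,a^{*}\rangle-\langle a,x^{*}\rangle\ge\langle x,x^{*}\rangle-\varphi_{T}(x,x^{*})$, so the quantity $\langle x-a,x^{*}-a^{*}\rangle$ is bounded \emph{below} by a constant independent of $(a,a^{*})$. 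To upgrade ``bounded below'' to ``$\ge 0$'', hence $(x,x^{*})\in\operatorname*{Graph}T^{+}$, I would use the separating functional: replacing $x^{*}$ is not allowed, but one can translate $a$ along directions where $u^{*}$ grows — more precisely, since $x\notin\overline{\operatorname*{conv}}^{\tau}(D(T))$ I expect one instead argues that $x$ lies in the closed convex hull after all by a recession/scaling trick, forcing the dichotomy. I will need to be careful here; this normalization step is the first place the argument could get delicate.

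For \eqref{lost-rec}, the left inclusion is \eqref{eq:}, so only $D(T^{+})\cup\overline{\operatorname*{conv}}^{\tau}(D(T))\subset\operatorname*{cl}_{\tau}\operatorname*{Pr}_{X}(\operatorname*{dom}\varphi_{T})$ remains, under the hypothesis $T\in\mathcal{M}(X)$ (monotone). For the $D(T^{+})$ part: if $(x,x^{*})\in\operatorname*{Graph}T^{+}$ then by definition $\langle x-a,x^{*}-a^{*}\rangle\ge0$ for all $(a,a^{*})\in\operatorname*{Graph}T$, which via the identity above gives $\varphi_{T}(x,x^{*})\le\langle x,x^{*}\rangle<\infty$, so $x\in\operatorname*{Pr}_{X}(\operatorname*{dom}\varphi_{T})$ outright — no closure needed. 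For the $\overline{\operatorname*{conv}}^{\tau}(D(T))$ part: it suffices to show $D(T)\subset\operatorname*{Pr}_{X}(\operatorname*{dom}\varphi_{T})$ and that $\operatorname*{Pr}_{X}(\operatorname*{dom}\varphi_{T})$ has convex closure; the first is clear since for $a_{0}\in D(T)$ pick $a_{0}^{*}\in Ta_{0}$ and check $\varphi_{T}(a_{0},a_{0}^{*})=\langle a_{0},a_{0}^{*}\rangle$ using monotonicity of $T$. Convexity of the closure of the projection is the point that genuinely uses $T$ monotone: $\varphi_{T}$ is convex and lower semicontinuous as a supremum of continuous affine functions, so $\operatorname*{dom}\varphi_{T}$ is convex, hence its projection is convex, hence so is its closure. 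That settles \eqref{lost-rec} and, combined with it, \eqref{eq:-1} follows once $T\in\mathfrak{M}(X)$ (maximal monotone): then $T^{+}=T$, so $D(T^{+})=D(T)$ and the sandwiched chain \eqref{lost-rec} collapses to $\operatorname*{cl}_{\tau}\operatorname*{Pr}_{X}(\operatorname*{dom}\varphi_{T})=\operatorname*{cl}_{\tau}(D(T^{+})\cup\overline{\operatorname*{conv}}^{\tau}(D(T)))=\overline{\operatorname*{conv}}^{\tau}(D(T))$.

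Finally \eqref{eq:-4} is immediate: if in addition $\overline{D(T)}^{\tau}$ is convex, then $\overline{\operatorname*{conv}}^{\tau}(D(T))=\overline{D(T)}^{\tau}$, so \eqref{eq:-1} becomes \eqref{eq:-4}. I expect the main obstacle to be the first inclusion \eqref{eq:} — specifically, proving that a point in $\operatorname*{dom}\varphi_{T}$ whose first coordinate escapes $\overline{\operatorname*{conv}}^{\tau}(D(T))$ must be monotonically related to $T$. The resolution should come from the Fitzpatrick inequality $\langle x,x^{*}\rangle+\langle a,a^{*}\rangle\ge\langle x,a^{*}\rangle+\langle a,x^{*}\rangle$ for all $(a,a^{*})\in\operatorname*{Graph}T$ rewritten as $\langle x-a,x^{*}-a^{*}\rangle\ge\langle x,x^{*}\rangle-\varphi_{T}(x,x^{*})$, together with a homogeneity argument: if some $(a_{0},a_{0}^{*})\in\operatorname*{Graph}T$ violated $\langle x-a_{0},x^{*}-a_{0}^{*}\rangle\ge0$, one scales to derive a contradiction with finiteness of $\varphi_{T}$, unless $x\in\overline{\operatorname*{conv}}^{\tau}(D(T))$. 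The remaining steps are routine convexity/closure bookkeeping and the substitution $T^{+}=T$ valid for maximal monotone operators.
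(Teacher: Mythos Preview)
Your handling of \eqref{lost-rec}, \eqref{eq:-1}, and \eqref{eq:-4} is correct and essentially identical to the paper's argument: $\operatorname{Graph}T^{+}\subset\operatorname{dom}\varphi_{T}$ gives $D(T^{+})\subset\operatorname{Pr}_{X}(\operatorname{dom}\varphi_{T})$, while $D(T)\subset\operatorname{Pr}_{X}(\operatorname{dom}\varphi_{T})$ and convexity of the projection handle the closed convex hull; then $T^{+}=T$ collapses the sandwich when $T$ is maximal monotone.

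The genuine gap is in \eqref{eq:}, and you have located it yourself but misdiagnosed it. You write ``replacing $x^{*}$ is not allowed'' and then try to push $(x,x^{*})$ itself into $\operatorname{Graph}T^{+}$ via a scaling/homogeneity trick on the $(a,a^{*})$ side. That cannot work: $\operatorname{Graph}T$ is not a cone, so there is nothing to scale, and in general $(x,x^{*})$ need not be monotonically related to $T$ even when $x\notin\overline{\operatorname{conv}}^{\tau}(D(T))$ and $\varphi_{T}(x,x^{*})<\infty$. The point you are missing is that the target is $x\in D(T^{+})$, not $(x,x^{*})\in\operatorname{Graph}T^{+}$; replacing $x^{*}$ \emph{is} allowed, and is exactly what the paper does. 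Concretely: from $x\notin\overline{\operatorname{conv}}^{\tau}(D(T))$ one separates to get $y^{*}\in X^{*}$ with $\inf_{a\in D(T)}\langle x-a,y^{*}\rangle>0$, and after multiplying $y^{*}$ by a suitable positive constant one may assume
\[
\inf_{a\in D(T)}\langle x-a,y^{*}\rangle\ \ge\ \varphi_{T}(x,x^{*})-\langle x,x^{*}\rangle.
\]
Combining this with the inequality you already derived, $\langle x-a,x^{*}-a^{*}\rangle\ge\langle x,x^{*}\rangle-\varphi_{T}(x,x^{*})$ for all $(a,a^{*})\in\operatorname{Graph}T$, gives
\[
\langle x-a,\,(x^{*}+y^{*})-a^{*}\rangle=\langle x-a,x^{*}-a^{*}\rangle+\langle x-a,y^{*}\rangle\ge0,
\]
so $(x,x^{*}+y^{*})\in\operatorname{Graph}T^{+}$ and hence $x\in D(T^{+})$. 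That is the missing step; the rest of your outline goes through unchanged.
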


\begin{proof} Let $T:X\rightrightarrows X^{*}$ and $(x,x^{*})\in\operatorname*{dom}\varphi_{T}$
with $x\not\in\overline{\operatorname*{conv}}^{\tau}(D(T))$. Separate
to get $y^{*}\in X^{*}$ such that $\inf\{\langle x-a,y^{*}\rangle\mid a\in D(T)\}\ge(\varphi_{T}-c)(x,x^{*})$.
For every $(a,a^{*})\in T$ we have
\[
\langle x-a,x^{*}+y^{*}-a^{*}\rangle\ge(c-\varphi_{T})(x,x^{*})+\langle x-a,y^{*}\rangle\ge0,
\]
 i.e., $(x,x^{*}+y^{*})\in\operatorname*{Graph}T^{+}$ so $x\in D(T^{+})$.
Therefore the first inclusion in (\ref{lost-rec}) holds for every
$T:X\rightrightarrows X^{*}$ while the second inclusion in (\ref{lost-rec})
follows from $D(T)\subset\operatorname*{Pr}_{X}(\operatorname*{dom}\varphi_{T})$
and $\operatorname*{Graph}T^{+}\subset\operatorname*{dom}\varphi_{T}$. 

If, in addition, $T\in\mathfrak{M}(X)$, then $T=T^{+}$ and (\ref{lost-rec})
translates into $\operatorname*{cl}_{\tau}\operatorname*{Pr}_{X}(\operatorname*{dom}\varphi_{T})=\overline{\operatorname*{conv}}^{\tau}(D(T))$.
\end{proof}

\strut

\begin{proof}[Proof of Theorem 2] According to Lemma \ref{lost},
$\operatorname*{cl}\,\!\!_{\tau}(\operatorname*{Pr}\,\!\!_{X}(\operatorname*{dom}\varphi_{T}))=\overline{D(T)}^{\tau}$. 

Because $T\in\mathfrak{M}(X)$ we claim that 
\begin{equation}
\forall x\in\Omega_{T}^{\tau},\ \exists U\in\mathscr{V}_{\tau}(x),\ T(U)\ {\rm is\ equicontinuous}\ {\rm and}\ U\cap\overline{D(T)}^{\tau}\subset D(T).\label{cyber}
\end{equation}

In particular $\Omega_{T}^{\tau}\subset D(T)$. 

Indeed, for every $x\in\Omega_{T}^{\tau}$ take $U$ a $\tau-$open
neighborhood of $x$ such that $T(U)$ is ($\tau-$)equicontinuous;
in particular $T(U)$ is weak-star relatively compact. For every $y\in U\cap\overline{D(T)}^{\tau}$
there is a net $(y_{i})_{i\in I}\subset U\cap D(T)$ such that $y_{i}\overset{\tau}{\to}y$.
Take $y_{i}^{*}\in Ty_{i}$, $i\in I$. At least on a subnet $y_{i}^{*}\to y^{*}$
weakly-star in $X^{*}$. For every $(a,a^{*})\in T$, $\langle y_{i}-a,y_{i}^{*}-a^{*}\rangle\ge0$,
because $T\in\mathcal{M}(X)$. After we pass to limit, taking into
account that $(y_{i}^{*})$ is ($\tau-$)equicontinuous, we get $\langle y-a,y^{*}-a^{*}\rangle\ge0$,
for every $(a,a^{*})\in T$. This yields $(y,y^{*})\in T$ due to
the maximality of $T$; in particular $y\in D(T)$. Therefore $U\cap\overline{D(T)}^{\tau}\subset D(T)$. 

Whenever $\operatorname*{core}\overline{D(T)}^{\tau}$ is non-empty,
$\operatorname*{int}\,\!\!_{\tau}\overline{D(T)}^{\tau}=\operatorname*{core}\overline{D(T)}^{\tau}\neq\emptyset$,
since $(X,\tau)$ is barreled. This yields that $\overline{D(T)}^{\tau}$
is solid, so, $\overline{D(T)}^{\tau}=\operatorname*{cl}_{\tau}(\operatorname*{int}_{\tau}\overline{D(T)}^{\tau})$
(see e.g. \cite[Lemma\ p.\ 59]{MR0410335}). 

Whenever $\Omega_{T}^{\tau}\neq\emptyset$ and $\operatorname*{core}\overline{D(T)}^{\tau}\neq\emptyset$,
from (\ref{cyber}), $D(T)$ contains the non-empty open set $U\cap\operatorname*{int}\,\!\!_{\tau}\overline{D(T)}^{\tau}$;
$\operatorname*{int}\,\!\!_{\tau}D(T)\neq\emptyset$ and so $\operatorname*{int}\,\!\!_{\tau}\operatorname*{Pr}_{X}(\operatorname*{dom}\varphi_{T})\neq\emptyset$
which also validates (\ref{eq:-2}).

From the above considerations and 
\[
\operatorname*{int}\,\!\!_{\tau}D(T)\subset\operatorname*{core}\operatorname*{Pr}\,\!\!_{X}(\operatorname*{dom}\varphi_{T})\subset\Omega_{T}^{\tau}\cap\operatorname*{core}\overline{D(T)}^{\tau}
\]
 we see that (iii) $\Rightarrow$ (i)$\Rightarrow$ (ii) $\Rightarrow$
(iii).

In this case, since $\operatorname*{Pr}_{X}(\operatorname*{dom}\varphi_{T})$
is convex, according to \cite[Lemma\ p.\ 59]{MR0410335},  
\[
\operatorname*{core}\operatorname*{Pr}\,\!\!_{X}(\operatorname*{dom}\varphi_{T})=\operatorname*{int}\,\!\!_{\tau}\operatorname*{Pr}\,\!\!_{X}(\operatorname*{dom}\varphi_{T})=\operatorname*{int}\,\!\!_{\tau}(\operatorname*{cl}\,\!\!_{\tau}(\operatorname*{Pr}\,\!\!_{X}(\operatorname*{dom}\varphi_{T})))=\operatorname*{int}\,\!\!_{\tau}\overline{D(T)}^{\tau}.
\]
Hence 
\[
\operatorname*{int}\,\!\!_{\tau}D(T)\subset\operatorname*{core}D(T)\subset\operatorname*{core}\operatorname*{Pr}\,\!\!_{X}(\operatorname*{dom}\varphi_{T})=\operatorname*{int}\,\!\!_{\tau}\operatorname*{Pr}\,\!\!_{X}(\operatorname*{dom}\varphi_{T})\subset\Omega_{T}^{\tau}\subset D(T)
\]
followed by  $\operatorname*{int}\,\!\!_{\tau}D(T)=\operatorname*{core}D(T)=\operatorname*{core}\operatorname*{Pr}\,\!\!_{X}(\operatorname*{dom}\varphi_{T})=\operatorname*{int}\,\!\!_{\tau}\overline{D(T)}^{\tau}$.

It remains to prove that $\Omega_{T}^{\tau}\subset\operatorname*{int}\,\!\!_{\tau}D(T)$.
Assume, by contradiction that $x\in\Omega_{T}^{\tau}\setminus\operatorname*{int}\,\!\!_{\tau}D(T)$.
Since, in this case, $x$ is a support point for the convex set $\overline{D(T)}^{\tau}$whose
interior is non-empty, we know that $N_{\overline{D(T)}^{\tau}}\{x\}\neq\{0\}$.
Together with $T=T+N_{\overline{D(T)}^{\tau}}$, this yields that
$Tx$ is unbounded; in contradiction to the fact that $Tx$ is equicontinuous.
\end{proof} 

\strut

Recall that a set $S\subset X$ is \emph{nearly-solid} if there is
a convex set $C$ such that $\operatorname*{int}C\neq\emptyset$ and
$C\subset S\subset\overline{C}$. Equivalently, $S$ is nearly-solid
iff $\operatorname*{int}S$ is non-empty convex and $S\subset\operatorname*{cl}(\operatorname*{int}S)$.
Indeed, directly, from $\operatorname*{int}C=\operatorname*{int}\overline{C}$
and $\operatorname*{cl}(\operatorname*{int}C)=\overline{C}$ (see
\cite[Lemma 11A b), p. 59]{MR0410335}) we know that $\operatorname*{int}S=\operatorname*{int}C$
is non-empty convex and $S\subset\overline{C}=\operatorname*{cl}(\operatorname*{int}S)$.
Conversely, $C=\operatorname*{int}S$ fulfills all the required conditions.

\begin{corollary} \label{tst} If $(X,\tau)$ is a barreled LCS and
$T\in\mathfrak{M}(X)$ has $\operatorname*{int}\,\!\!_{\tau}D(T)\neq\emptyset$
and $\overline{D(T)}^{\tau}$ convex then $D(T)$ is nearly-solid
and $\Omega_{T}^{\tau}=\operatorname*{int}\,\!_{\tau}D(T)$. \end{corollary}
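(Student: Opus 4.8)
The plan is to obtain Corollary \ref{tst} as a direct specialization of Theorem \ref{ot}. The corollary's hypotheses are exactly condition (iii) of the theorem together with the standing assumption that $\overline{D(T)}^{\tau}$ is convex, so the theorem applies verbatim. First I would invoke the equivalence (iii)$\Rightarrow$(i), which gives $\operatorname*{core}\operatorname*{Pr}_{X}(\operatorname*{dom}\varphi_{T})\neq\emptyset$, placing us in the ``in this case'' regime of Theorem \ref{ot}. Then the chain of equalities \eqref{eq:-3} yields immediately that $\Omega_{T}^{\tau}=\operatorname*{int}_{\tau}D(T)$, which is the second assertion.

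For the first assertion, that $D(T)$ is nearly-solid, I would use the final clause of Theorem \ref{ot}: under the same hypotheses one has $\operatorname*{int}_{\tau}D(T)=\operatorname*{int}_{\tau}\overline{D(T)}^{\tau}$ and $\overline{D(T)}^{\tau}=\operatorname*{cl}_{\tau}(\operatorname*{int}_{\tau}D(T))$. Taking $C:=\operatorname*{int}_{\tau}D(T)$, which is non-empty by hypothesis (iii) and convex because it equals $\operatorname*{int}_{\tau}$ of the convex set $\overline{D(T)}^{\tau}$, we get $C\subset D(T)\subset\overline{D(T)}^{\tau}=\operatorname*{cl}_{\tau}C$, so $D(T)$ is nearly-solid in the sense recalled just before the corollary. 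Alternatively one can quote the equivalent internal characterization: $\operatorname*{int}_{\tau}D(T)$ is non-empty convex and $D(T)\subset\operatorname*{cl}_{\tau}(\operatorname*{int}_{\tau}D(T))$, both of which are furnished by \eqref{eq:-3} and the nearly-solid clause of the theorem.

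Since every ingredient is already packaged inside Theorem \ref{ot}, there is essentially no obstacle; the only thing to be careful about is checking that the hypothesis ``$\operatorname*{int}_{\tau}D(T)\neq\emptyset$'' of the corollary is literally statement (iii) of the theorem — it is — and that convexity of $\overline{D(T)}^{\tau}$ is assumed in both places, so the dichotomy in Theorem \ref{ot} is triggered. Thus the proof is a two-line deduction, and I would write it as such: apply Theorem \ref{ot} with hypothesis (iii), read off \eqref{eq:-3} for the equicontinuity-set identity, and read off the near-solidity clause for the first claim.

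\begin{proof} The hypothesis $\operatorname*{int}_{\tau}D(T)\neq\emptyset$ is condition \emph{(iii)} of Theorem \ref{ot}, which, together with the convexity of $\overline{D(T)}^{\tau}$, puts us in the situation where \eqref{eq:-3} holds. Hence $\Omega_{T}^{\tau}=\operatorname*{int}_{\tau}D(T)$. Moreover, by the last assertion of Theorem \ref{ot}, $\operatorname*{int}_{\tau}D(T)=\operatorname*{int}_{\tau}\overline{D(T)}^{\tau}$ and $\overline{D(T)}^{\tau}=\operatorname*{cl}_{\tau}(\operatorname*{int}_{\tau}D(T))$. Setting $C:=\operatorname*{int}_{\tau}D(T)$, which is non-empty and convex (being the $\tau-$interior of the convex set $\overline{D(T)}^{\tau}$), we obtain $C\subset D(T)\subset\overline{D(T)}^{\tau}=\operatorname*{cl}_{\tau}C$, so $D(T)$ is nearly-solid. \end{proof}
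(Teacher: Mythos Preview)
Your proposal is correct and matches the paper's approach exactly: the paper states Corollary \ref{tst} without proof, as an immediate consequence of Theorem \ref{ot}, and your deduction---invoke condition (iii), read off \eqref{eq:-3} and the near-solidity clause---is precisely the intended argument.
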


\begin{theorem} \label{ot-bn} If $(X,\tau)$ is a barreled normed
space and $T\in\mathfrak{M}(X)$ has $\operatorname*{core}\operatorname*{Pr}\,\!\!_{X}(\operatorname*{dom}\varphi_{T})\neq\emptyset$
then $D(T)$ is nearly-solid and $\Omega_{T}^{\tau}=\operatorname*{core}\operatorname*{Pr}\,\!\!_{X}(\operatorname*{dom}\varphi_{T})=\operatorname*{int}\,\!_{\tau}D(T)$.
\end{theorem}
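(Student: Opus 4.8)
The plan is to deduce this statement from Theorem \ref{ot} by verifying its hypothesis, namely that $\overline{D(T)}^{\tau}$ is convex under the present assumptions. Once that is done, condition (i) of Theorem \ref{ot} holds by assumption, so (\ref{eq:-3}) gives directly $\Omega_{T}^{\tau}=\operatorname*{core}\operatorname*{Pr}_{X}(\operatorname*{dom}\varphi_{T})=\operatorname*{int}_{\tau}D(T)$, and the near-solidity of $D(T)$ is the final assertion of Theorem \ref{ot}. So the whole content is reduced to a single implication: in a barreled normed space, if $\operatorname*{core}\operatorname*{Pr}_{X}(\operatorname*{dom}\varphi_{T})\neq\emptyset$ then $\overline{D(T)}^{\tau}$ is convex.

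First I would record what Lemma \ref{lost} already gives without any convexity assumption: for $T\in\mathfrak{M}(X)$ one has $\operatorname*{cl}_{\tau}\operatorname*{Pr}_{X}(\operatorname*{dom}\varphi_{T})=\overline{\operatorname*{conv}}^{\tau}(D(T))$, and moreover (\ref{lost-rec}) sandwiches $\operatorname*{Pr}_{X}(\operatorname*{dom}\varphi_{T})$ between $D(T^{+})\cup\overline{\operatorname*{conv}}^{\tau}(D(T))$ and its closure. Since $T$ is maximal monotone, $T=T^{+}$, hence $D(T^{+})=D(T)\subset\overline{\operatorname*{conv}}^{\tau}(D(T))$, and therefore $\operatorname*{Pr}_{X}(\operatorname*{dom}\varphi_{T})$ is itself a convex set (it lies between a convex set and its closure), with $\operatorname*{cl}_{\tau}\operatorname*{Pr}_{X}(\operatorname*{dom}\varphi_{T})=\overline{\operatorname*{conv}}^{\tau}(D(T))$. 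Now the hypothesis $\operatorname*{core}\operatorname*{Pr}_{X}(\operatorname*{dom}\varphi_{T})\neq\emptyset$, together with barreledness and \cite[Lemma\ p.\ 59]{MR0410335}, upgrades this to $\operatorname*{int}_{\tau}\operatorname*{Pr}_{X}(\operatorname*{dom}\varphi_{T})=\operatorname*{core}\operatorname*{Pr}_{X}(\operatorname*{dom}\varphi_{T})\neq\emptyset$ and $\overline{\operatorname*{conv}}^{\tau}(D(T))=\operatorname*{cl}_{\tau}(\operatorname*{int}_{\tau}\operatorname*{Pr}_{X}(\operatorname*{dom}\varphi_{T}))$.

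The crux is then to show $\overline{D(T)}^{\tau}=\overline{\operatorname*{conv}}^{\tau}(D(T))$, i.e. that $D(T)$ is already ``almost convex'' in the sense that its closure coincides with the closed convex hull of its domain; this is exactly where I expect the main obstacle to be, and where the normed (rather than merely barreled LCS) hypothesis is used. Pick $x_{0}\in\operatorname*{int}_{\tau}\operatorname*{Pr}_{X}(\operatorname*{dom}\varphi_{T})$; by (\ref{eq:lbg}) and the identification of $\operatorname*{core}$ with $\operatorname*{int}_{\tau}$ just established, $x_{0}\in\Omega_{T}^{\tau}$, so by the argument behind (\ref{cyber}) (maximal monotonicity plus equicontinuity and Bourbaki's theorem) there is a neighborhood $U$ of $x_{0}$ with $U\cap\overline{\operatorname*{conv}}^{\tau}(D(T))\subset D(T)$. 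Hence $D(T)$ contains the nonempty open set $U\cap\operatorname*{int}_{\tau}\operatorname*{Pr}_{X}(\operatorname*{dom}\varphi_{T})$, so $\operatorname*{int}_{\tau}D(T)\neq\emptyset$; being open and contained in the convex set $\operatorname*{Pr}_{X}(\operatorname*{dom}\varphi_{T})$ whose closure is $\overline{\operatorname*{conv}}^{\tau}(D(T))$, a standard convexity fact gives $\operatorname*{cl}_{\tau}(\operatorname*{int}_{\tau}D(T))=\overline{\operatorname*{conv}}^{\tau}(D(T))$, and since $D(T)\subset\operatorname*{Pr}_{X}(\operatorname*{dom}\varphi_{T})$ we get $\overline{D(T)}^{\tau}=\overline{\operatorname*{conv}}^{\tau}(D(T))$, which is convex. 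This verifies the hypothesis of Theorem \ref{ot}, and with condition (i) of that theorem in force the conclusion of Theorem \ref{ot-bn} follows from (\ref{eq:-3}) and its last assertion.
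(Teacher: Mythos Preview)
Your reduction to Theorem \ref{ot} is the right strategy, and the paper follows it too: everything hinges on proving $\overline{D(T)}^{\tau}$ convex. But the argument you give for that step is circular.

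The key slip is at ``$x_{0}\in\Omega_{T}^{\tau}$''. By definition $\Omega_{T}^{\tau}\subset\overline{D(T)}^{\tau}$, and (\ref{eq:lbg}) explicitly intersects with $\overline{D(T)}^{\tau}$. At this point all you know about $x_{0}$ is that it lies in $\operatorname*{int}_{\tau}\operatorname*{Pr}_{X}(\operatorname*{dom}\varphi_{T})\subset\overline{\operatorname*{conv}}^{\tau}(D(T))$; you have no information placing it in $\overline{D(T)}^{\tau}$, which is precisely what you are trying to establish. The same circularity reappears in your strengthened version of (\ref{cyber}): the limiting argument there needs a net in $D(T)$ converging to $y$, so it can only ever yield $U\cap\overline{D(T)}^{\tau}\subset D(T)$, not $U\cap\overline{\operatorname*{conv}}^{\tau}(D(T))\subset D(T)$. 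Your ``standard convexity fact'' is then unsupported as well: a single nonempty open subset of $D(T)$ inside the convex set $\operatorname*{Pr}_{X}(\operatorname*{dom}\varphi_{T})$ does not force $\operatorname*{cl}_{\tau}(\operatorname*{int}_{\tau}D(T))$ to equal the whole closure $\overline{\operatorname*{conv}}^{\tau}(D(T))$. Notice also that your argument never touches the normed-space hypothesis; if it worked, it would prove the result in every barreled LCS, making the distinction with Theorem \ref{ot} pointless.

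The paper closes this gap by a genuinely different device: for fixed $z\in\operatorname*{core}\operatorname*{Pr}_{X}(\operatorname*{dom}\varphi_{T})$ it applies a Fenchel duality (min--max) theorem to $\Phi(x^{*},x):=\varphi_{T}(x+z,x^{*})-\langle z,x^{*}\rangle$, producing $y^{*}$ with $\varphi_{T}^{*}(y^{*},z)\le\langle z,y^{*}\rangle$, whence $(z,y^{*})\in[\psi_{T}\le c]=T$ and $z\in D(T)$. The normed structure is used to guarantee the lower semicontinuity needed for that duality result. This shows directly $\operatorname*{core}\operatorname*{Pr}_{X}(\operatorname*{dom}\varphi_{T})\subset D(T)$, from which the convexity of $\overline{D(T)}^{\tau}$ follows at once.
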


\begin{proof} It suffices to prove that $\overline{D(T)}^{\tau}$
is convex. We actually prove that
\[
\operatorname*{core}\operatorname*{Pr}\,\!_{X}(\operatorname*{dom}\varphi_{T})\subset D(T);
\]
from which $\overline{D(T)}^{\tau}=\operatorname*{cl}_{\tau}(\operatorname*{core}\operatorname*{Pr}\,\!_{X}(\operatorname*{dom}\varphi_{T}))=\operatorname*{cl}_{\tau}\operatorname*{Pr}\,\!_{X}(\operatorname*{dom}\varphi_{T})$
is convex. 

For a fixed $z\in\operatorname*{core}\operatorname*{Pr}\,\!_{X}(\operatorname*{dom}\varphi_{T})$
consider the function $\Phi:X^{*}\times X\rightarrow\overline{\mathbb{R}}$,
$\Phi(x^{*},x)=\varphi_{T}(x+z,x^{*})-\langle z,x^{*}\rangle$. Then
$0\in\operatorname*{core}(\operatorname*{Pr}_{X}(\operatorname*{dom}\Phi))$.
The function $\Phi$ is convex, proper, and $w^{*}\times\tau-$lsc
so it is also $s^{*}\times\tau-$lsc, where $s^{*}$ denotes the strong
topology on $X^{*}$. 

We may use \cite[Proposition\ 2.7.1\ (vi),\ p. 114]{MR1921556} to
get 
\begin{equation}
\inf_{x^{*}\in X^{*}}\Phi(x^{*},0)=\max_{y^{*}\in X^{*}}(-\Phi^{*}(0,y^{*})).\label{fd-1}
\end{equation}

Because $\varphi_{T}\ge c$ (see e.g. \cite[Corollary\ 3.9]{MR1009594}),
$\inf_{x^{*}\in X^{*}}\Phi(x^{*},0)\ge0$ and that $\Phi^{*}(x^{**},x^{*})=\varphi_{T}^{*}(x^{*},x^{**}+z)-\langle z,x^{*}\rangle$,
$x^{*}\in X^{*}$, $x^{**}\in X^{**}$. Therefore, from (\ref{fd-1}),
there exists $y^{*}\in X^{*}$ such that $\varphi_{T}^{*}(y^{*},z)=:\psi_{T}(z,y^{*})\le\langle z,y^{*}\rangle$,
that is, $(z,y^{*})\in[\psi_{T}\le c]=[\psi_{T}=c]=T$ (see \cite[Theorem\ 2.2]{MR2207807}
or \cite[Theorem\ 1]{MR2577332} for more details). Hence $z\in D(T)$.
\end{proof} 

\begin{corollary} \label{rock-1} If $X$ is a barreled normed space
and $T\in\mathfrak{M}(X)$ has $\operatorname*{core}(\operatorname*{conv}D(T))\neq\emptyset$
then $D(T)$ is nearly-solid and $\Omega_{T}=\operatorname*{int}D(T)$.
\end{corollary}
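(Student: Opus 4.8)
The plan is to reduce the hypothesis $\operatorname*{core}(\operatorname*{conv}D(T))\neq\emptyset$ to the hypothesis of Theorem \ref{ot-bn}, namely $\operatorname*{core}\operatorname*{Pr}_{X}(\operatorname*{dom}\varphi_{T})\neq\emptyset$, after which the conclusion ($D(T)$ nearly-solid, $\Omega_{T}=\operatorname*{int}D(T)$) is immediate. The key containment I would exploit is the elementary one $D(T)\subset\operatorname*{Pr}_{X}(\operatorname*{dom}\varphi_{T})$, which was already used in the proof of Lemma \ref{lost} (it follows at once from $\varphi_T\ge c$, so that $\varphi_T(a,a^*)<\infty$ for $(a,a^*)\in\operatorname*{Graph}T$). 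Since $\operatorname*{Pr}_{X}(\operatorname*{dom}\varphi_{T})$ is convex (the Fitzpatrick function is convex as a sup of affine functions, hence its domain and the projection thereof are convex), we get $\operatorname*{conv}D(T)\subset\operatorname*{Pr}_{X}(\operatorname*{dom}\varphi_{T})$, and therefore $\operatorname*{core}(\operatorname*{conv}D(T))\subset\operatorname*{core}\operatorname*{Pr}_{X}(\operatorname*{dom}\varphi_{T})$ by monotonicity of the algebraic-interior operation on nested sets. Thus the hypothesis $\operatorname*{core}(\operatorname*{conv}D(T))\neq\emptyset$ forces $\operatorname*{core}\operatorname*{Pr}_{X}(\operatorname*{dom}\varphi_{T})\neq\emptyset$.

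With that in hand, Theorem \ref{ot-bn} applies verbatim: $D(T)$ is nearly-solid and $\Omega_{T}=\operatorname*{core}\operatorname*{Pr}_{X}(\operatorname*{dom}\varphi_{T})=\operatorname*{int}_{\tau}D(T)$. Since a normed space carries a single topology under discussion, I would drop the $\tau$ decoration and simply write $\Omega_{T}=\operatorname*{int}D(T)$, matching the statement. One should also remark that in a normed space (which is in particular metrizable and barreled) $\operatorname*{core}$ and $\operatorname*{int}$ coincide on convex sets with nonempty core by \cite[Lemma p.\ 59]{MR0410335}, so the reformulation $\operatorname*{int}D(T)=\operatorname*{core}D(T)$ in the conclusion is consistent with what Theorem \ref{ot-bn} delivers.

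I do not anticipate a genuine obstacle here; the content is entirely in Theorem \ref{ot-bn}, and this corollary is the packaging that recovers Rockafellar's classical formulation phrased in terms of $\operatorname*{conv}D(T)$ rather than $\operatorname*{dom}\varphi_T$. The only point requiring a line of justification is the convexity of $\operatorname*{Pr}_{X}(\operatorname*{dom}\varphi_{T})$ together with the inclusion $D(T)\subset\operatorname*{Pr}_{X}(\operatorname*{dom}\varphi_{T})$, both of which are standard and already implicitly invoked above; everything else is a direct citation. If one wanted to be maximally self-contained, the short proof reads: \emph{Since $D(T)\subset\operatorname*{Pr}_{X}(\operatorname*{dom}\varphi_{T})$ and the latter set is convex, $\operatorname*{conv}D(T)\subset\operatorname*{Pr}_{X}(\operatorname*{dom}\varphi_{T})$, whence $\emptyset\neq\operatorname*{core}(\operatorname*{conv}D(T))\subset\operatorname*{core}\operatorname*{Pr}_{X}(\operatorname*{dom}\varphi_{T})$; now apply Theorem \ref{ot-bn}.}
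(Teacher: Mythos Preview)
Your proposal is correct and matches the paper's intended argument: the corollary is stated there without proof, as an immediate consequence of Theorem~\ref{ot-bn} via exactly the inclusion $\operatorname*{conv}D(T)\subset\operatorname*{Pr}_{X}(\operatorname*{dom}\varphi_{T})$ that you spell out. One minor correction to your parenthetical: the inclusion $D(T)\subset\operatorname*{Pr}_{X}(\operatorname*{dom}\varphi_{T})$ comes from the monotonicity of $T$ (which gives $\varphi_{T}=c$ on $\operatorname*{Graph}T$, hence finiteness there), not from the inequality $\varphi_{T}\ge c$, which is a lower bound and by itself says nothing about $\varphi_{T}$ being finite.
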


\begin{corollary} \label{rock-2} If $X$ is a Banach space and $T\in\mathfrak{M}(X)$
has $\operatorname*{int}(\operatorname*{conv}D(T))\neq\emptyset$
then $D(T)$ is nearly-solid and $\Omega_{T}=\operatorname*{int}D(T)$.
\end{corollary}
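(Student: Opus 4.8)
The plan is to deduce Corollary \ref{rock-2} from Corollary \ref{rock-1} by the single observation that a Banach space is a barreled normed space and that in a normed space the topological interior and the algebraic core of a convex set coincide. So the whole burden is to verify the hypothesis of Corollary \ref{rock-1}, namely $\operatorname*{core}(\operatorname*{conv}D(T))\neq\emptyset$, from the assumption $\operatorname*{int}(\operatorname*{conv}D(T))\neq\emptyset$; but since $\operatorname*{conv}D(T)$ is convex and $X$ is normed (in particular first countable and barreled), we have $\operatorname*{int}_\tau C=\operatorname*{core}C$ for any convex $C$ by \cite[Lemma\ p.\ 59]{MR0410335}, hence the two hypotheses are literally the same.

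First I would note that every Banach space is a barreled normed space, so Corollary \ref{rock-1} applies verbatim once the hypothesis is matched. Second, I would invoke the standard fact (already used repeatedly above, e.g.\ in the proof of Theorem \ref{ot} and cited as \cite[Lemma\ p.\ 59]{MR0410335}) that for a convex subset $C$ of a barreled LCS the algebraic interior and the topological interior agree as soon as one of them is non-empty; applied to $C=\operatorname*{conv}D(T)$ this gives $\operatorname*{int}(\operatorname*{conv}D(T))=\operatorname*{core}(\operatorname*{conv}D(T))$, so the assumption $\operatorname*{int}(\operatorname*{conv}D(T))\neq\emptyset$ yields $\operatorname*{core}(\operatorname*{conv}D(T))\neq\emptyset$. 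Third, I would simply quote Corollary \ref{rock-1} to conclude that $D(T)$ is nearly-solid and $\Omega_T=\operatorname*{int}D(T)$, which is exactly the assertion of Corollary \ref{rock-2}.

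There is essentially no obstacle here: the statement is a specialization of the preceding corollary, and the only thing to be careful about is making sure the interior/core identification is legitimately available — which it is, because $X$ normed $\Rightarrow$ $X$ barreled, and the convex set in question is $\operatorname*{conv}D(T)$. One might optionally remark, for the reader's comfort, that in a normed space one does not even need barreledness for the direction $\operatorname*{core}C\supseteq\operatorname*{int}C$ that is actually used (that inclusion is trivial), and the reverse inclusion, while true in the barreled setting, is not needed to transfer the hypothesis. Thus the proof is two lines.

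\begin{proof}[Proof of Corollary 8] Since $X$ is a Banach space, $(X,\|\cdot\|)$ is a barreled normed space. As $\operatorname*{conv}D(T)$ is convex, \cite[Lemma\ p.\ 59]{MR0410335} gives $\operatorname*{int}(\operatorname*{conv}D(T))=\operatorname*{core}(\operatorname*{conv}D(T))$, so the hypothesis $\operatorname*{int}(\operatorname*{conv}D(T))\neq\emptyset$ amounts to $\operatorname*{core}(\operatorname*{conv}D(T))\neq\emptyset$. Corollary \ref{rock-1} applies and yields that $D(T)$ is nearly-solid and $\Omega_{T}=\operatorname*{int}D(T)$. \end{proof}
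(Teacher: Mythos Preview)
Your proposal is correct and matches the paper's intent: Corollary~\ref{rock-2} is stated in the paper without any proof, so it is meant to be an immediate specialization of Corollary~\ref{rock-1}, exactly as you argue. Note that, as you yourself observe, only the trivial inclusion $\operatorname*{int}C\subset\operatorname*{core}C$ is needed here, so the appeal to \cite[Lemma\ p.\ 59]{MR0410335} (or barreledness) for the equality is superfluous.
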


Under a Banach space settings the proof of the following result is
known from \cite[p. 406]{MR0253014} (see also \cite[Theorem 3.11.15, p. 286]{MR1921556})
and it relies on the Bishop-Phelps Theorem, namely, on the density
of the set of support points to a closed convex set in its boundary.
The novelty of our argument is given by the use of the maximal monotonicity
of the normal cone to a closed convex set.

\begin{theorem} \label{mnc}Let $X$ be a Banach space and let $T\in\mathfrak{M}(X)$
be such that $\overline{D(T)}$ is convex and $\Omega_{T}\neq\emptyset$.
Then $\Omega_{T}=\operatorname*{int}D(T)$. \end{theorem}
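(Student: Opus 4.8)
The plan is to reduce this theorem to Theorem \ref{ot} and Corollary \ref{tst} by showing that the hypotheses $\overline{D(T)}$ convex and $\Omega_T\neq\emptyset$ force $\operatorname*{int}_\tau D(T)\neq\emptyset$, which is the equivalent condition (iii) there. Once $\operatorname*{int}_\tau D(T)\neq\emptyset$ is established, equation (\ref{eq:-3}) of Theorem \ref{ot} immediately gives $\Omega_T=\operatorname*{int}_\tau D(T)$ and we are done; the only real content is bridging the gap between ``$T$ is locally equicontinuous somewhere'' and ``$D(T)$ has nonempty interior'' in the convex-closure setting, without assuming $\operatorname*{core}\overline{D(T)}\neq\emptyset$ a priori.

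First I would fix $x_0\in\Omega_T$ and invoke the claim (\ref{cyber}) from the proof of Theorem \ref{ot} (which holds for any $T\in\mathfrak{M}(X)$ in a barreled space): there is $U\in\mathscr V_\tau(x_0)$ with $T(U)$ equicontinuous and $U\cap\overline{D(T)}^\tau\subset D(T)$. So it suffices to show $U\cap\operatorname*{int}_\tau\overline{D(T)}^\tau\neq\emptyset$, i.e. that $\overline{D(T)}^\tau$ has nonempty interior. Here is where the Banach (Bishop–Phelps) input enters, exactly as the remark preceding the theorem advertises. Suppose for contradiction that $\operatorname*{int}\overline{D(T)}=\emptyset$. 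Then the closed convex set $C:=\overline{D(T)}$ has empty interior, so at \emph{every} point of $C$ — in particular at every point of $C\cap U$, which is nonempty since $x_0\in\overline{D(T)}$ — the set $C$ is supported: by Bishop–Phelps the support points of $C$ are dense in the boundary of $C$, and when $\operatorname*{int}C=\emptyset$ every point of $C$ is a boundary point, so support points are dense in $C$ itself. Pick a support point $y\in C\cap U$ (shrinking $U$ to an open ball if necessary, density gives one). Then $N_C(y)\neq\{0\}$.

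Next I would use the identity $T=T+N_{\overline{D(T)}}$, valid because $T$ is maximal monotone, $\overline{D(T)}$ is closed convex, and $D(T)\subset\overline{D(T)}=D(N_{\overline{D(T)}})$ with $N_{\overline{D(T)}}$ also maximal monotone and the sum maximal monotone under the domain-containment condition (this is the standard sum theorem for a normal cone, and is precisely the tool cited at the end of the proof of Theorem \ref{ot}). Since $y\in C\cap U\subset D(T)$ by (\ref{cyber}), and $N_C(y)$ is an unbounded set (any nonzero ray lies in it), $Ty=Ty+N_C(y)$ is unbounded. But $y\in U$ and $T(U)$ is equicontinuous, hence $Ty$ is bounded — a contradiction. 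Therefore $\operatorname*{int}\overline{D(T)}\neq\emptyset$, so $U\cap\operatorname*{int}\overline{D(T)}$ is a nonempty open subset of $D(T)$ by (\ref{cyber}), giving $\operatorname*{int}_\tau D(T)\neq\emptyset$; now Theorem \ref{ot}, condition (iii), applies and yields $\Omega_T=\operatorname*{int}_\tau D(T)=\operatorname*{int}D(T)$.

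The main obstacle I anticipate is the careful handling of the Bishop–Phelps step: one must be sure that the \emph{density} of support points in the boundary actually produces a support point inside the given neighborhood $U$ of $x_0$, and that when $\operatorname*{int}C=\emptyset$ the notion ``boundary of $C$'' really is all of $C$ (true in a Banach space, where a closed convex set with empty interior is its own boundary). A minor secondary point is to confirm that the sum formula $T=T+N_{\overline{D(T)}}$ requires only $\overline{D(T)}$ convex (given) plus completeness of $X$, so no extra hypothesis is smuggled in; everything else is a direct citation of results already in the excerpt. No separate lemma is needed — the argument is short once (\ref{cyber}) and the normal-cone sum identity are in hand.
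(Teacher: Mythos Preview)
Your proof is correct, but it follows precisely the classical Rockafellar route that the paper explicitly sets out to \emph{avoid}. The paragraph preceding the theorem states that the known proof ``relies on the Bishop--Phelps Theorem'' and that ``the novelty of our argument is given by the use of the maximal monotonicity of the normal cone.'' Your argument is the former: you use density of support points to find $y\in U\cap\overline{D(T)}$ with $N_{\overline{D(T)}}(y)\neq\{0\}$, derive unboundedness of $Ty$ from $T=T+N_{\overline{D(T)}}$, and contradict equicontinuity; then you feed $\operatorname*{int}D(T)\neq\emptyset$ back into Theorem~\ref{ot}.

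The paper's proof is more direct and does not pass through Theorem~\ref{ot} or Bishop--Phelps at all. For $x\in\Omega_T$ it takes $U$ closed convex as in (\ref{cyber}), observes from $T=T+N_{\overline{D(T)}}$ and the equicontinuity of $T(U)$ that $N_{\overline{D(T)}}(u)=\{0\}$ for every $u\in\overline{D(T)}\cap U$, and then compares two normal cones: the sum rule gives $N_{\overline{D(T)}\cap U}=N_{\overline{D(T)}}+N_U=N_U|_{D(T)}\subset N_U$. Since $N_{\overline{D(T)}\cap U}\in\mathfrak{M}(X)$ and $N_U$ is monotone, their graphs coincide, hence their domains coincide: $\overline{D(T)}\cap U=U$, so $U\subset D(T)$ and $x\in\operatorname*{int}D(T)$. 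What this buys is a Bishop--Phelps--free argument, which is the paper's declared point; what your approach buys is a shorter path once Bishop--Phelps is taken for granted, and a clean reduction to the already-proved Theorem~\ref{ot}.
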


\begin{proof} For every $x\in\Omega_{T}$, let $U\in\mathscr{V}(x)$
be closed convex as in (\ref{cyber}). Since $T+N_{\overline{D(T)}}=T$
and $T(U)$ is equicontinuous we know that there are no support points
to $\overline{D(T)}$ in $U$, i.e., for every $u\in\overline{D(T)}\cap U=D(T)\cap U$,
$N_{\overline{D(T)}}|_{U}(u)=\{0\}$.

Then 
\[
N_{\overline{D(T)}\cap U}=N_{\overline{D(T)}}+N_{U}=N_{U}|_{D(T)}\subset N_{U},
\]
and since $N_{\overline{D(T)}\cap U}\in\mathfrak{M}(X)$, $N_{U}\in\mathcal{M}(X)$
we get that $U\subset D(T)$, and so $x\in\operatorname*{int}D(T)$.
\end{proof}

\strut

We conclude this paper with two results on the convex subdifferential.

\begin{theorem} \label{msd} Let $(X,\tau)$ be a LCS and let $f:X\to\overline{\mathbb{R}}$
be proper convex $\tau-$lower semicontinuous such that $f$ is continuous
at some $x_{0}\in\operatorname*{int}_{\tau}(\operatorname*{dom}f)$.
Then $\partial f\in\mathfrak{M}(X)$, $D(\partial f)$, $\operatorname*{dom}f$
are nearly-solid, $\operatorname*{int}_{\tau}(\operatorname*{dom}f)=\operatorname*{int}_{\tau}D(\partial f)$,
$\operatorname*{cl}_{\tau}(\operatorname*{dom}f)=\overline{D(\partial f)}^{\tau}$,
and $\Omega_{\partial f}^{\tau}=\operatorname*{int}\,\!\!_{\tau}\, D(\partial f)$.
\end{theorem}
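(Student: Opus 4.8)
\textbf{Proof proposal for Theorem \ref{msd}.}
The plan is to reduce everything to the machinery already developed for maximal monotone operators, principally Theorem \ref{ot} and its corollaries, together with the classical facts about continuous convex functions. First I would recall that a proper convex $\tau$-lsc function $f$ which is continuous at a point of $\operatorname*{int}_{\tau}(\operatorname*{dom}f)$ is automatically $\tau$-continuous on all of $\operatorname*{int}_{\tau}(\operatorname*{dom}f)$, and that at every such point $\partial f(x)\neq\emptyset$; moreover $\partial f(x)$ is $\tau$-equicontinuous there, since local $\tau$-boundedness of the subdifferential is exactly the statement that $f$ is locally Lipschitz with respect to the seminorms, which holds on the interior of the domain of a continuous convex function. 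This already gives $\operatorname*{int}_{\tau}(\operatorname*{dom}f)\subset D(\partial f)$ and $\operatorname*{int}_{\tau}(\operatorname*{dom}f)\subset\Omega_{\partial f}^{\tau}$. The maximal monotonicity $\partial f\in\mathfrak{M}(X)$ is the Moreau--Rockafellar theorem, valid in any LCS for a proper convex lsc $f$ that is continuous at some point of its domain (the continuity point supplies the constraint qualification needed for the sum rule behind the Rockafellar argument); I would simply cite this.

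Next I would pin down the closures and interiors of $\operatorname*{dom}f$. Because $f$ is continuous at $x_{0}\in\operatorname*{int}_{\tau}(\operatorname*{dom}f)$, the set $\operatorname*{int}_{\tau}(\operatorname*{dom}f)$ is non-empty convex, and the standard result \cite[Lemma 11A, p. 59]{MR0410335} gives $\operatorname*{int}_{\tau}(\operatorname*{dom}f)=\operatorname*{int}_{\tau}(\operatorname*{cl}_{\tau}(\operatorname*{dom}f))$ and $\operatorname*{cl}_{\tau}(\operatorname*{dom}f)=\operatorname*{cl}_{\tau}(\operatorname*{int}_{\tau}(\operatorname*{dom}f))$, i.e. $\operatorname*{dom}f$ is nearly-solid. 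Since $D(\partial f)\subset\operatorname*{dom}f$ always and $\operatorname*{int}_{\tau}(\operatorname*{dom}f)\subset D(\partial f)$, taking interiors and closures yields $\operatorname*{int}_{\tau}D(\partial f)=\operatorname*{int}_{\tau}(\operatorname*{dom}f)$ and $\overline{D(\partial f)}^{\tau}=\operatorname*{cl}_{\tau}(\operatorname*{dom}f)$; in particular $\overline{D(\partial f)}^{\tau}$ is convex and has non-empty $\tau$-interior, and $D(\partial f)$ is nearly-solid.

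Now the hypotheses of Theorem \ref{ot} are met — $(X,\tau)$ need not be barreled here, so I would instead invoke the weaker route: $\overline{D(\partial f)}^{\tau}$ is convex with non-empty interior, so $\operatorname*{int}_{\tau}(\operatorname*{dom}\varphi_{\partial f})=\operatorname*{core}$ of the same, and the inclusions $\operatorname*{int}_{\tau}D(\partial f)\subset\Omega_{\partial f}^{\tau}$ (from the equicontinuity already noted) and $\Omega_{\partial f}^{\tau}\subset\operatorname*{int}_{\tau}D(\partial f)$ close the loop. For the latter inclusion I would run the support-point argument from the proof of Theorem \ref{ot}: if $x\in\Omega_{\partial f}^{\tau}\setminus\operatorname*{int}_{\tau}D(\partial f)$ then, $\overline{D(\partial f)}^{\tau}$ being solid convex, $x$ is a support point, so $N_{\overline{D(\partial f)}^{\tau}}(x)\neq\{0\}$; combined with $\partial f=\partial f+N_{\overline{D(\partial f)}^{\tau}}$ (valid since $\overline{\operatorname*{dom}f}^{\tau}=\overline{D(\partial f)}^{\tau}$ is convex and $f+\iota_{\overline{D(\partial f)}^{\tau}}=f$), $\partial f(x)$ is unbounded, contradicting equicontinuity. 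Hence $\Omega_{\partial f}^{\tau}=\operatorname*{int}_{\tau}D(\partial f)$.

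The main obstacle I anticipate is not the monotone-operator part, which is essentially a citation of Theorem \ref{ot}, but rather justifying cleanly, in a general (non-barreled) LCS, the two "soft" facts about convex functions: that continuity at one interior point propagates to local equicontinuity of $\partial f$ on the whole interior of the domain, and that $\partial f=\partial f+N_{\overline{\operatorname*{dom}f}^{\tau}}$. Both are standard but deserve a precise reference (e.g. \cite{MR1921556} for the first in the normed case, with the seminorm version being routine; and the Moreau--Rockafellar sum rule for the second), since the rest of the argument leans on them. Once these are in place the theorem follows by assembling the inclusions exactly as in Theorem \ref{ot}. \rule{0.5em}{0.5em}
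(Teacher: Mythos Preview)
Your outline is correct and matches the paper closely on every point \emph{except} the maximal monotonicity of $\partial f$. For the near-solidity of $\operatorname*{dom}f$ and $D(\partial f)$, the equalities of interiors and closures, the inclusion $\operatorname*{int}_{\tau}D(\partial f)\subset\Omega_{\partial f}^{\tau}$ via local equicontinuity of $\partial f$ on $\operatorname*{int}_{\tau}(\operatorname*{dom}f)$, and the reverse inclusion via the support-point/normal-cone argument, you and the paper do exactly the same thing (the paper also first records $\Omega_{\partial f}^{\tau}\subset D(\partial f)$ via the maximality argument (\ref{cyber}), which you should make explicit before the unboundedness step, since otherwise $\partial f(x)$ could be empty and there is nothing to contradict).

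The genuine divergence is that you treat $\partial f\in\mathfrak{M}(X)$ as a citation of Moreau--Rockafellar in a general LCS, whereas the paper supplies its own proof: it shows $\partial f$ is of negative-infimum type ($\varphi_{\partial f}\ge c$) by assuming $\varphi_{\partial f}(x,x^{*})<\langle x,x^{*}\rangle$, restricting to the line through $x_{0}$ and $x$ via $g(t)=f(tx+(1-t)x_{0})$, applying the chain rule $\partial g(t)=L^{*}\partial f(Lt+x_{0})$ (valid because of the continuity point), and then using that $\partial g\in\mathfrak{M}(\mathbb{R})$ is elementary to force $x\in D(\partial f)$, a contradiction. Your route is shorter if one accepts the general-LCS Rockafellar theorem as quotable; the paper's route is self-contained, stays inside the Fitzpatrick-function framework of the article, and reduces everything to the one-dimensional case---which is arguably the point of including this theorem here rather than simply citing it.
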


\begin{proof} Recall that in this case $f$ is continuous on $\operatorname*{int}_{\tau}(\operatorname*{dom}f)$,
$\operatorname*{dom}f$ is a solid set, and $\operatorname*{int}_{\tau}(\operatorname*{dom}f)\subset D(\partial f)\subset\operatorname*{dom}f$
(see e.g. \cite[Theorems\ 2.2.9,\ 2.4.12]{MR1921556}). The latter
inclusions implies that $D(\partial f)$ is nearly-solid, and $\operatorname*{int}_{\tau}(\operatorname*{dom}f)=\operatorname*{int}_{\tau}D(\partial f)$,
$\operatorname*{cl}_{\tau}(\operatorname*{dom}f)=\overline{D(\partial f)}^{\tau}$. 

Clearly, $f(x)+f^{*}(x^{*})$ is a representative of $\partial f$
so $\partial f$ is representable. In order for $\partial f\in\mathfrak{M}(X)$
it suffices to prove that $\partial f$ is of negative infimum type
(see \cite[Theorem\ 1(ii)]{MR2577332} or \cite[Theorem\ 2.3]{MR2207807}).
Seeking a contradiction, assume that $(x,x^{*})\in X\times X^{*}$
satisfies $\varphi_{\partial f}(x,x^{*})<\langle x,x^{*}\rangle$;
in particular $(x,x^{*})$ is monotonically related to $\partial f$,
i.e., for every $(u,u^{*})\in\partial f$,$\langle x-u,x^{*}-u^{*}\rangle\ge0$.
Let $g:\mathbb{R}\to\overline{\mathbb{R}}$, $g(t):=f(tx+(1-t)x_{0})=f(Lt+x_{0})$,
where $L:\mathbb{R}\to X$, $Lt=t(x-x_{0})$. According to the chain
rule  
\[
\partial g(t)=L^{*}(\partial f(Lt+x_{0})),\ {\rm or}\ s\in\partial g(t)\Leftrightarrow\exists y^{*}\in\partial f(tx+(1-t)x_{0}),\ \langle x-x_{0},y^{*}\rangle=s.
\]
The function $g$ is proper convex lower semicontinuous so $\partial g\in\mathfrak{M}(\mathbb{R})$.
But, $(1,\langle x-x_{0},x^{*}\rangle)$ is monotonically related
to $\partial g$ because, for every $s\in\partial g(t)$ there is
$y^{*}\in\partial f(tx+(1-t)x_{0})$ such that $\langle x-x_{0},y^{*}\rangle=s$
which provides$(1-t)(\langle x,x^{*}\rangle-s)=\langle x-(tx+(1-t)x_{0}),x^{*}-y^{*}\rangle\ge0$.
Therefore $(1,\langle x-x_{0},x^{*}\rangle)\in\partial g$, in particular
$1\in D(\partial g)$ and $x\in D(\partial f)$. That implies the
contradiction $\varphi_{\partial f}(x,x^{*})\ge\langle x,x^{*}\rangle$. 

Hence $\varphi_{\partial f}(x,x^{*})\ge\langle x,x^{*}\rangle$, for
every $(x,x^{*})\in X\times X^{*}$, that is, $\partial f$ is of
negative infimum type and consequently maximal monotone. 

Also $\partial f$ is locally equicontinuous on $\operatorname*{int}_{\tau}(\operatorname*{dom}f)$
which shows that $\operatorname*{int}\,\!\!_{\tau}\, D(\partial f)\subset\Omega_{\partial f}^{\tau}$
(see e.g. \cite[Theorem\ 2.2.11]{MR1921556} and the proof of \cite[Theorem\ 2.4.9]{MR1921556}). 

Because $\partial f\in\mathfrak{M}(X)$, we see, as in the proof of
Theorem \ref{ot}, that $\Omega_{\partial f}^{\tau}\subset D(\partial f)$.
From $f=f+\iota_{\operatorname*{dom}f}$ we get $\partial f=\partial f+N_{\overline{D(\partial f)}}$
which shows that $\partial f(x)$ is unbounded for each $x\in D(\partial f)\setminus\operatorname*{int}_{\tau}D(\partial f)$
since $N_{\overline{D(\partial f)}}(x)\neq\{0\}$. Hence $\Omega_{\partial f}^{\tau}=\operatorname*{int}\,\!\!_{\tau}\, D(\partial f)$.
\end{proof}

\begin{theorem} \label{msd-bar} Let $(X,\tau)$ be a barreled LCS
and let $f:X\to\overline{\mathbb{R}}$ be proper convex $\tau-$lower
semicontinuous with $\operatorname*{core}(\operatorname*{dom}f)\neq\emptyset$.
Then $\partial f\in\mathfrak{M}(X)$, $D(\partial f)$, $\operatorname*{dom}f$
are nearly-solid, $\operatorname*{int}_{\tau}(\operatorname*{dom}f)=\operatorname*{int}_{\tau}D(\partial f)$,
$\operatorname*{cl}_{\tau}(\operatorname*{dom}f)=\overline{D(\partial f)}^{\tau}$,
and $\Omega_{\partial f}^{\tau}=\operatorname*{int}\,\!\!_{\tau}\, D(\partial f)$.
\end{theorem}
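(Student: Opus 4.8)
The plan is to reduce Theorem \ref{msd-bar} to the already-proven Theorem \ref{msd} by showing that, in a barreled space, the hypothesis $\operatorname*{core}(\operatorname*{dom}f)\neq\emptyset$ automatically upgrades to the continuity hypothesis of Theorem \ref{msd}. Concretely, I would first observe that $f$ is proper convex and $\tau$-lsc, so its domain $\operatorname*{dom}f$ is a convex set, and the assumption gives $\operatorname*{core}(\operatorname*{dom}f)\neq\emptyset$. In a barreled LCS this forces $\operatorname*{int}_{\tau}(\operatorname*{dom}f)=\operatorname*{core}(\operatorname*{dom}f)\neq\emptyset$ (the same barreledness fact, via \cite[Lemma\ p.\ 59]{MR0410335}, that is invoked repeatedly in the proof of Theorem \ref{ot}). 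So there exists $x_{0}\in\operatorname*{int}_{\tau}(\operatorname*{dom}f)$.

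Next I would show $f$ is continuous at that $x_{0}$. The standard argument: pick $x_{0}\in\operatorname*{core}(\operatorname*{dom}f)$ and a point $x_{1}\in\operatorname*{dom}f$ with $x_{0}$ in the "core" between $x_{1}$ and some $x_{2}$; then the sublevel set $[f\le\lambda]$ for a suitable $\lambda$ is a convex, $\tau$-closed (by lsc), absorbing set around $x_{0}$, hence — since $X$ is barreled and $[f\le\lambda]-x_0$ is a barrel — a $\tau$-neighborhood of $x_{0}$. On that neighborhood $f$ is bounded above, and a proper convex function bounded above on a neighborhood of an interior point of its domain is continuous there (see e.g. \cite[Theorem\ 2.2.9]{MR1921556}). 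Thus $f$ satisfies exactly the hypotheses of Theorem \ref{msd}: it is proper convex $\tau$-lsc and continuous at a point $x_{0}\in\operatorname*{int}_{\tau}(\operatorname*{dom}f)$.

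Finally, invoking Theorem \ref{msd} verbatim yields every conclusion: $\partial f\in\mathfrak{M}(X)$, that $D(\partial f)$ and $\operatorname*{dom}f$ are nearly-solid, $\operatorname*{int}_{\tau}(\operatorname*{dom}f)=\operatorname*{int}_{\tau}D(\partial f)$, $\operatorname*{cl}_{\tau}(\operatorname*{dom}f)=\overline{D(\partial f)}^{\tau}$, and $\Omega_{\partial f}^{\tau}=\operatorname*{int}_{\tau}D(\partial f)$. Nothing further is needed.

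The only genuinely substantive step is the barrel argument: turning "absorbing convex closed sublevel set" into "$\tau$-neighborhood" using barreledness, and then "bounded above on a neighborhood" into "continuous." Both are classical, but the first is precisely where barreledness (rather than, say, Baire category or a Banach structure) is used, and it is worth spelling out that $[f\le\lambda]-x_0$ is a barrel — convex, balanced after symmetrization, closed, absorbing — so that its being a neighborhood of $0$ is exactly the defining property of a barreled space. I expect no obstacle beyond carefully choosing $\lambda$ and the auxiliary points so that the sublevel set is genuinely absorbing at $x_0$; everything else is a direct appeal to Theorem \ref{msd}.
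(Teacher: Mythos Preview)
Your proposal is correct and follows essentially the same route as the paper: reduce to Theorem \ref{msd} by using barreledness to upgrade $\operatorname*{core}(\operatorname*{dom}f)\neq\emptyset$ to continuity of $f$ on $\operatorname*{int}_{\tau}(\operatorname*{dom}f)=\operatorname*{core}(\operatorname*{dom}f)$, then invoke Theorem \ref{msd} verbatim. The paper simply cites \cite[Theorem\ 2.2.20]{MR1921556} for this step, whereas you sketch its proof via the barrel argument on a sublevel set; the substance is the same.
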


\begin{proof} It suffices to notice that under a barreled space context
$\operatorname*{core}(\operatorname*{dom}f)=\operatorname*{int}_{\tau}(\operatorname*{dom}f)$,
$f$ is continuous on $\operatorname*{int}_{\tau}(\operatorname*{dom}f)$
(see e.g. \cite[Theorem\ 2.2.20]{MR1921556}), and we may apply Theorem
\ref{msd}. \end{proof}

\bibliographystyle{plain}

\begin{thebibliography}{1}
\bibitem{MR1009594} Simon Fitzpatrick. \newblock Representing monotone
operators by convex functions. \newblock In {\em Workshop/{M}iniconference
on {F}unctional {A}nalysis and {O}ptimization ({C}anberra,
1988)}, volume~20 of {\em Proc. Centre Math. Anal. Austral. Nat.
Univ.}, pages 59--65. Austral. Nat. Univ., Canberra, 1988.

\bibitem{MR0410335} Richard~B. Holmes. \newblock {\em Geometric
functional analysis and its applications}. \newblock Springer-Verlag,
New York, 1975. \newblock Graduate Texts in Mathematics, No. 24.

\bibitem{MR0253014} R.~T. Rockafellar. \newblock Local boundedness
of nonlinear, monotone operators. \newblock {\em Michigan Math.
J.}, 16:397--407, 1969.

\bibitem{MR2207807} M.~D. Voisei. \newblock A maximality theorem
for the sum of maximal monotone operators in non-reflexive {B}anach
spaces. \newblock {\em Math. Sci. Res. J.}, 10(2):36--41, 2006.

\bibitem{MR3252437} M.~D. Voisei. \newblock The {M}inimal {C}ontext
for {L}ocal {B}oundedness in {T}opological {V}ector {S}paces.
\newblock {\em An. \c{S}tiin\c{t}. Univ. Al. I. Cuza Ia\c{s}i.
Mat. (N.S.)}, 59(2):219--235, 2013.

\bibitem{MR2577332} M.~D. Voisei and C.~Z{\u{a}}linescu. \newblock
Maximal monotonicity criteria for the composition and the sum under
weak interiority conditions. \newblock {\em Math. Program.}, 123(1,
Ser. B):265--283, 2010.

\bibitem{MR1921556} C.~Z{\u{a}}linescu. \newblock {\em Convex
analysis in general vector spaces}. \newblock World Scientific Publishing
Co. Inc., River Edge, NJ, 2002.\end{thebibliography}

\end{document}